\newtheorem{thmintr}{Theorem}
\newtheorem{prop}{Proposition}[subsection]
\newtheorem{lem}[prop]{Lemma}
\newtheorem{cor}[prop]{Corollary}
\theoremstyle{definition}
\newtheorem{rem}[prop]{Remark}
\newtheorem{exwith}[prop]{Example}
\newtheorem*{ack}{Acknowledgement}
\newcommand{\bfa}{\mathbf a}
\newcommand{\bfb}{\mathbf b}
\newcommand{\C}{\mathbb C}
\newcommand{\rmd}{\mathrm d}
\newcommand{\rme}{\mathrm e}
\newcommand{\rmi}{\mathrm i}
\newcommand{\N}{\mathbb N}
\newcommand{\Q}{\mathbb Q}
\newcommand{\bfz}{\mathbf z}
\DeclareMathOperator{\lcm}{\mathrm{lcm}}
\DeclareMathOperator{\RS}{\mathrm{RS}}
\newcommand{\chim}{\chi_{\mathrm{m}}}
\begin{document}

\author{Florian Buck}
\address{Fakult\"at f\"ur Mathematik, Ruhr-Universit\"at Bochum,
Universit\"atsstra{\ss}e 150, D-44801 Bochum, Germany}
\email{Florian.Buck@rub.de, Kai.Zehmisch@rub.de}
\author{Kai Zehmisch}

\title[Connected sums of Brieskorn contact $5$-spheres]
{Connected sums of Brieskorn contact $5$-spheres}

\date{07.07.2025}

\begin{abstract}
  In dimension $5$,
  the contact connected sum of Brieskorn contact spheres
  is, in general, not a Brieskorn contact sphere.
\end{abstract}

\subjclass[2020]{53D42; 53D40, 37C27, 37J55, 57R17.}
\thanks{This work is part of a project in the SFB/TRR 191
{\it Symplectic Structures in Geometry, Algebra and Dynamics},
funded by the DFG}

\maketitle



\section{Introduction\label{sec:intro}}

{\bf Brieskorn manifolds} $\Sigma(\bfa)$,
initially studied by Brieskorn \cite{brk66},
Hirzebruch--Mayer \cite{hm68}
and Milnor \cite{mil68}
in the context of exotic differentiable structures on spheres,
are given as link $\{p_{\bfa}=0\}\cap S^{2n+1}$
of singularity of complex polynomials
\[
p_{\bfa}(z_0,\ldots,z_n)=
z_0^{a_0}+\cdots+z_n^{a_n}
\,,
\qquad
\bfa=(a_0,\ldots,a_n)
\,.
\]
Many of them, the so-called {\bf Brieskorn spheres},
are homotopy equivalent to the unit sphere
$S^{2n+1}\subset\C^{n+1}$,
cf.\ Section \ref{subsec:brieskorncriterion} below.
Each Brieskorn manifold $\Sigma(\bfa)$
carries a Stein fillable contact structure $\xi_{\bfa}$,
which we will call a {\bf Brieskorn contact structure}.
The contact structure $\xi_{\bfa}$
is defined by intersecting the standard contact structure
$TS^{2n+1}\cap\rmi TS^{2n+1}$ of $S^{2n+1}$
with $T\Sigma(\bfa)$, see \cite[Section 7.1]{gei08}.
In case $\Sigma(\bfa)$ is diffeomorphic
to the standard sphere $S^{2n-1}$,
exoticity questions of the contact structures $\xi_{\bfa}$
become relevant contact-topologically.
Here,
a contact structure on $S^{2n-1}\subset\C^n$
not contactomorphic to $TS^{2n-1}\cap\rmi TS^{2n-1}$
is called {\bf exotic}.

Bennequin \cite{benn83} found the first examples
of exotic contact structures on $S^3$.
Eliashberg \cite{eli89} observed that these examples are
overtwisted contact structures,
for which an $h$-principle holds and which are not fillable,
see \cite{elia90,gro85} and cf.\ \cite[Corollary 3.8]{gz13}.
Extensions of overtwistedness and non-fillability phenomena
to higher dimensions also hold true,
see \cite{bem15} and \cite{nie06, mnw13,schsz25}, resp.

Eliashberg \cite{elia91}, Geiges \cite{gei97} and Ding--Geiges \cite{dg04}
gave examples of exotic contact structures on $S^{2n-1}$ for $n\geq3$.
Exoticity here was established
via the Eliashberg--Floer--McDuff theorem
\cite[Theorem 1.5]{mcd91}
about smooth uniqueness of symplectically aspherical fillings
of the standard contact sphere;
such a filling must be diffeomorphic to a ball,
see also \cite{bgz19,zhou23}.
Further aspects of exoticity phenomena in terms of fillability,
Reeb dynamics and tightness were investigated in
\cite{mcl11,kvk16,am19,laz20,bgmz24}.

Exotic Brieskorn contact structures on standard spheres do exist.
Ustilovsky \cite{ust99} showed,
based on contact homology as worked out e.g.\ by Pardon \cite{par19},
that on $S^{4m+1}$, $m\in\N$,
there exists infinitely many pairwise non-contactomorphic
Brieskorn contact structures
in every homotopy class of almost contact structures.
Fauck \cite{fau15} obtained the same result by alternatively using
Rabinowitz--Floer homology;
Gutt \cite{gutt17} (and in parts Uebele \cite{ueb16}) via
positive equivariant symplectic homology.

Kwon--van Koert \cite{kvk16}
extended these results, which in particular in dimension $5$
are of interest by the following curiosities:
In dimension $5$, no exotic smooth spheres exist
according to the generalised Poincar\'e conjecture
(verified by Smale \cite{sma61})
and the almost contact structure on $S^5$
given by $TS^5\cap\rmi TS^5$ is homotopically unique,
see \cite[Lemma 1]{mss61}.
The latter implies with Gray stability (cf.\ \cite{gei08})
and the $h$-principle (see \cite{bem15})
that up to isotopy $S^5$ admits a unique overtwisted contact structure. 
Kwon--van Koert demonstrated
that the restriction of the mean Euler characteristic
(of the $S^1$-equivariant symplectic homology)
minus $\frac12$
to a certain submonoid of contact structures on $S^5$
yields an epimorphism onto $(\Q,+)$,
see \cite[Theorem 5.22]{kvk16}.
Here,
the monoid operation is the contact connected sum
with the standard contact sphere as zero element,
cf.\ \cite{gei08}.
In this article we show
that the submonoid contains infinitely many
contact structures on $S^5$
that are not Brieskorn.
The examples are obtained by contact connected sum
of Brieskorn contact structures.

The following theorem was discovered in \cite{bck24}.

\begin{thmintr}
 \label{thmintr:monodromy}
 There are
 infinitely many pairs of
 $4$-tuples $\bfa$ and $\bfb$ of natural numbers
 such that the contact structure $\xi_{\bfa}\#\xi_{\bfb}$ on $S^5$,
 defined by contact connected sum
 of the Brieskorn contact spheres
 $\big(\Sigma(\bfa),\xi_{\bfa}\big)$ and
 $\big(\Sigma(\bfb),\xi_{\bfb}\big)$,
 is not contactomorphic to a Brieskorn contact structure.
 Furthermore the contact structures
 $\xi_{\bfa}\#\xi_{\bfb}$ are pairwise
 not contactomorphic.
\end{thmintr}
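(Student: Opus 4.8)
The plan is to separate the contact structures by the mean Euler characteristic $\chi_m$ of positive $S^1$-equivariant symplectic homology, which is a contactomorphism invariant. Since $\Sigma(\bfa)$ and $\Sigma(\bfb)$ are diffeomorphic to $S^5$ by the Brieskorn criterion of Section~\ref{subsec:brieskorncriterion} --- there being no exotic smooth $5$-spheres --- their contact connected sum is genuinely a contact structure on $S^5\cong S^5\#S^5$. The key structural input is Kwon--van Koert's result \cite[Theorem 5.22]{kvk16}, by which $\chi_m-\tfrac12$ is a monoid homomorphism into $(\Q,+)$ on the relevant submonoid; applied to two Brieskorn generators this yields
\[
\chi_m(\xi_{\bfa}\#\xi_{\bfb})=\chi_m(\xi_{\bfa})+\chi_m(\xi_{\bfb})-\tfrac12.
\]
It therefore suffices to produce infinitely many admissible pairs $(\bfa,\bfb)$ for which the right-hand side is pairwise distinct and, for each pair, is not attained by $\chi_m(\xi_{\mathbf c})$ for any single admissible $4$-tuple $\mathbf c$.

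First I would make the set of candidate values explicit. From the Brieskorn criterion I record which $4$-tuples $\mathbf c=(c_0,c_1,c_2,c_3)$ give $\Sigma(\mathbf c)\cong S^5$, and I feed these into the explicit combinatorial expression for $\chi_m(\xi_{\mathbf c})$ as a rational function of $\mathbf c$ from \cite{kvk16}. This determines the set $B\subset\Q$ of Brieskorn values. The target is then to understand $B$ precisely enough to exhibit rationals of the form $r+s-\tfrac12$, with $r,s\in B$, that themselves lie outside $B$.

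Next I would write down an explicit infinite family realising such values. Concretely I would fix one tuple $\bfb$ and let $\bfa$ vary in a one-parameter family obtained by increasing a single exponent along which admissibility is preserved, so that $\chi_m(\xi_{\bfa})$ becomes an explicit monotone sequence of rationals. Monotonicity guarantees the connected-sum values $\chi_m(\xi_{\bfa})+\chi_m(\xi_{\bfb})-\tfrac12$ are pairwise distinct, hence the corresponding contact structures $\xi_{\bfa}\#\xi_{\bfb}$ are pairwise non-contactomorphic by invariance of $\chi_m$. It remains to check that each such value lies outside $B$; granting this, the displayed identity and the invariance of $\chi_m$ show that $\xi_{\bfa}\#\xi_{\bfb}$ is not contactomorphic to any Brieskorn contact sphere.

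The main obstacle is exactly this last verification: showing the constructed sums escape $B$. It is subtle because \cite[Theorem 5.22]{kvk16} makes $\chi_m-\tfrac12$ surject onto all of $(\Q,+)$; as this surjectivity is realised by connected sums of Brieskorn generators, the set $B$ itself must already contain rationals of unbounded denominator, so that no crude sign or size estimate can separate $B$ from the sums $r+s-\tfrac12$. Instead one must control the fine arithmetic imposed on $B$ by the $\chi_m$-formula together with the Brieskorn criterion --- the admissible denominators and the residues of the numerators --- and show that the chosen combinations violate these constraints. Pinning down such a congruence- or denominator-type obstruction, separating genuine single-tuple values from the chosen connected sums, is the combinatorial heart of the argument, and is the discovery of \cite{bck24}.
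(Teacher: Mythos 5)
Your overall framework matches the paper's: use the mean Euler characteristic $\chim$ as a contactomorphism invariant, exploit the additivity $\chim(\xi_{\bfa}\#\xi_{\bfb})=\chim(\xi_{\bfa})+\chim(\xi_{\bfb})-\tfrac12$ (for $n=3$), and produce a one-parameter monotone family to get pairwise distinct values. The paper indeed uses the family $\Sigma_m=\Sigma(m,m+1,2m+1,4m+3)$ with $3\nmid m$, for which $\chim(\xi_m)$ is an explicit strictly decreasing rational function of $m$ for $m\geq4$.

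However, there is a genuine gap at exactly the step you flag as ``the combinatorial heart of the argument'': you assert that no crude sign or size estimate can separate the set $B$ of single-tuple Brieskorn values from the connected-sum values, and you defer the separation to an unspecified congruence- or denominator-type obstruction. This is both unproven in your write-up and based on a misreading of \cite[Theorem 5.22]{kvk16}: the surjectivity of $\chim-\tfrac12$ onto $(\Q,+)$ there is realised on the submonoid generated by Brieskorn spheres under \emph{iterated} contact connected sums, not by single Brieskorn tuples, so it imposes no lower bound of the kind you fear on $B$ itself. The paper's key insight is precisely a sign estimate: Proposition \ref{prop:contactpositive} shows $\chim(\xi_{\mathbf c})>0$ for \emph{every} $5$-dimensional Brieskorn sphere $\Sigma(\mathbf c)$. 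This rests on Proposition \ref{prop:fullypositive}, which verifies via Brieskorn's criterion that all invariant $3$-dimensional Brieskorn submanifolds $\Sigma(\bfb)\subset\Sigma(\mathbf c)$ are rational homology spheres ($\kappa(\bfb)=0$), so that every summand $\chi^{S^1}(\Sigma_{T_i})$ in the formula for $\chim$ is positive. Combined with $\chim(\xi_4)<\tfrac14$ and monotonicity, one gets $\chim(\xi_m\#\xi_m)=2\chim(\xi_m)-\tfrac12<0$ for all $m\geq4$, which no Brieskorn value can equal. Without this positivity result (or a substitute for it), your argument does not close.
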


Observe, that the method used in this paper shows,
in dimensions $2n-1\geq7$,
that the subclass of Brieskorn contact structures on $S^{2n-1}$
having {\it pairwise coprime exponents}
is not closed under contact connected sum,
see Remark \ref{rem:general}.
In fact,
there are infinitely many,
pairwise non-contactomorphic such examples.


\section{Topological invariants\label{sec:topinv}}


\subsection{Brieskorn's criterion\label{subsec:brieskorncriterion}}

Let $\bfa=(a_0,\ldots,a_n)$ be an $(n+1)$-tuple
of natural numbers $a_j\geq2$ for all $j=0,\ldots,n$.
Assume that $n\geq3$.
We denote by $\Gamma(\bfa)$ the graph
with $n+1$ vertices labeled by the $a_j$, $j=0,\ldots,n$,
and an edge between $a_k$ and  $a_{\ell}$ for all $k\neq\ell$
whenever the greatest common divisor of $a_k$ and $a_{\ell}$
satisfies $\gcd(a_k,a_{\ell})\geq2$.
The connected component of $\Gamma(\bfa)$
consisting of even numbers is denoted by $\Gamma^2(\bfa)$.
In this situation \cite[Satz 1]{brk66} can be phrased as follows:
The Brieskorn manifold $\Sigma(\bfa)$ is homeomorphic to $S^{2n-1}$
if and only if one of the following conditions is satisfied:
\begin{itemize}
  \item[(i)]
    $\Gamma(\bfa)$ has at least two isolated points.
  \item[(ii)]
    $\Gamma(\bfa)$ has an isolated point and
    the connected component $\Gamma^2(\bfa)$ of $\Gamma(\bfa)$
    satisfies:
    $\#\Gamma^2(\bfa)>1$ is odd
    (hence, $\Gamma^2(\bfa)\neq\emptyset$) and
    $\gcd(a_k,a_{\ell})=2$ for all $a_k,a_{\ell}\in \Gamma^2(\bfa)$, $k\neq\ell$.
\end{itemize}
For $n=2$, the conditions for $\Gamma(\bfa)$
are equivalent to the fact that $\Gamma(\bfa)$
is an integral homology sphere.

\begin{exwith}
\label{ex:sequencewithm}
For later use we remark:
Given a natural number $m\geq2$,
a quick application of B\'ezout's diophantic identity $ax+by=1$ yields,
that the numbers $m,m+1,2m+1$ and $4m+3$ satisfy
(i) of Brieskorn's criterion with isolated points $m+1$ and $2m+1$.
In fact, they are pairwise coprime provided $\gcd(m,3)=1$.
Therefore,
as in dimension $5$ no exotic spheres exist (see \cite{sma61,km63}),
\[
\Sigma_m:=
\Sigma\big(m,m+1,2m+1,4m+3\big)
\]
is diffeomorphic to $S^5$.
\end{exwith}

\begin{exwith}
\label{ex:fermatm}
For integers $\ell=0,1,2,\ldots$,
{\bf Fermat numbers}
\[
F_{\ell}:=2^{2^{\ell}}+1
\]
can be computed recursively via
\[
F_{\ell}=F_0\cdot F_1\cdot\ldots\cdot F_{\ell-1}+2
\,,
\quad\ell\geq1
\,,
\]
where $F_{\ell}-2=F_{\ell-1}(F_{\ell-1}-2)$
is used to proceed with the inductive step.
In particular,
for $k<\ell$ we find $\gcd(F_k,F_{\ell})=1$.
Therefore,
by Brieskorn's criterion
\[
\Sigma_{\ell}:=
\Sigma\big(F_{\ell},\ldots,F_{\ell+n}\big)
\]
is a topological $(n-1)$-sphere.
\end{exwith}


\subsection{Positivity of equivariant Euler characteristic\label{subsec:posofequiveuchar}}

All Brieskorn manifolds $\Sigma(\bfa)$, $\bfa=(a_0,\ldots,a_n)$,
carry a natural fixed-point free
(finite isotropy groups are allowed)
circle action
\[
\zeta\cdot(z_0,\ldots,z_n)=
\big(\zeta^{d/a_0}z_0,\ldots,\zeta^{d/a_n}z_n\big)
\,,
\]
where $\zeta\in\C$ with $|\zeta|=1$ and
\[
d=d_{\bfa}:=\lcm(a_0,\ldots,a_n)
\,.
\]
Randell \cite{ran75} computed the corresponding
rational equivariant homology using the rational Gysin sequence,
cf.\ \cite[Section 4.2]{fschvk12}.
The related Euler characteristic equals
\[
\chi^{S^1}\big(\Sigma(\bfa)\big)=
n+(-1)^{n-1}\kappa(\bfa)
\,,
\]
see \cite[Corollary 1]{ran75},
where $\kappa(\bfa)$ denotes the rank
of the singular homology of $\Sigma(\bfa)$ in degree $n-1$
and equals
\[
\kappa(\bfa)=
\sum_{k=0}^{n+1}
\sum_{I_k}(-1)^{n+1-k}
\frac{\Pi_{j\in I_k}a_j}{\lcm_{j\in I_k}a_j}
\,,
\]
see \cite[Section 3]{ran75} or \cite[Section 3]{kvk16}.
The inner sum ranges over all subsets $I_k$ of $\{0,\ldots,n\}$
with $k\in\{0,\ldots,n+1\}$ elements.
Observe that for $k=2$ the summands are given
by $(-1)^{n+1}\gcd_{j\in I_2}a_j$,
so that the expression starts with
\[
\kappa(\bfa)=
(-1)^{n+1}
\Big(
1-(n+1)+\sum_{I_2}\gcd_{j\in I_2}a_j\mp\cdots
\Big)
\,.
\]

\begin{rem}
\label{rem:easypositive}
$\chi^{S^1}\big(\Sigma(\bfa)\big)$ is positive
whenever $n$ is odd or $\kappa(\bfa)=0$,
the latter being the case precisely when $\Sigma(\bfa)$
is a rational homology sphere
(as $\Sigma(\bfa)$ is $(n-2)$-connected
cf.\ \cite[Proposition 3.5]{kvk16}).
\end{rem}


\subsection{Invariant Brieskorn submanifolds\label{subsec:invbriesub}}

Given a Brieskorn manifold $\Sigma(\bfa)$
together with the natural circle action
from Section \ref{subsec:posofequiveuchar}
one obtains invariant submanifolds $\Sigma(\bfb)$
that themselves are Brieskorn
by freezing some but at most $(n-1)$-many coordinates $z_j$
of $\C^{n+1}$ at zero.
Here,
the tuple $\bfb$ of integers emerges from $\bfa$
by deleting all entries that correspond to a frozen coordinate.
We will indicate this writing $\bfb\subset\bfa$.

\begin{prop}
\label{prop:fullypositive}
Let $\Sigma(\bfa)$ be diffeomorphic to $S^5$,
i.e.\ $\bfa$ fulfills Brieskorn's criterion.
Then the equivariant Euler characteristic satisfies
\[
\chi^{S^1}\big(\Sigma(\bfb)\big)>0
\]
for all invariant Brieskorn submanifolds
$\Sigma(\bfb)$, $\bfb\subset\bfa$.
\end{prop}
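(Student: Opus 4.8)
The plan is to organise the argument by the length of the sub-tuple. Since $\dim\Sigma(\bfa)=5$ forces the parameter to be $n=3$, the tuple $\bfa$ has four entries, and because at most $n-1=2$ coordinates may be frozen, every $\bfb\subset\bfa$ has two, three, or four entries. Writing $|\bfb|=m+1$, the pertinent instance of the Euler-characteristic formula is $\chi^{S^1}(\Sigma(\bfb))=m+(-1)^{m-1}\kappa(\bfb)$. For $m$ odd---that is, for $\bfb$ of length $2$ or of length $4$ (the latter being $\bfb=\bfa$ itself)---positivity follows at once from Remark~\ref{rem:easypositive}, whose ``$n$ odd'' clause applies to the sub-tuple with its own odd parameter $m$. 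The entire statement therefore reduces to the single case $m=2$ of three-entry sub-tuples $\bfb=(b_0,b_1,b_2)$.

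For that case I would expand $\kappa(\bfb)$ directly from its definition: the terms with $k=0,1$ contribute $-1+3=2$, the $k=2$ terms contribute $-(g_{01}+g_{02}+g_{12})$ where $g_{ij}:=\gcd(b_i,b_j)$, and the single $k=3$ term contributes $b_0b_1b_2/\lcm(b_0,b_1,b_2)$. This gives
\[
\chi^{S^1}\big(\Sigma(\bfb)\big)=2-\kappa(\bfb)=g_{01}+g_{02}+g_{12}-\frac{b_0b_1b_2}{\lcm(b_0,b_1,b_2)}.
\]
The decisive simplification is the elementary three-variable identity
\[
\frac{b_0b_1b_2}{\lcm(b_0,b_1,b_2)}=\frac{g_{01}g_{02}g_{12}}{\gcd(b_0,b_1,b_2)},
\]
verified prime by prime, which rewrites the expression entirely in terms of greatest common divisors.

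It then remains to bound this gcd-expression using Brieskorn's criterion for $\bfa$, and I expect the genuine work to lie in the ensuing case distinction. If $\bfb$ contains a vertex isolated in $\Gamma(\bfa)$, then that vertex is coprime to the other two entries, so two of the three pairwise gcds and also the triple gcd equal $1$, whence the formula collapses to $\chi^{S^1}(\Sigma(\bfb))=2$. Under case (i) at least two of the four vertices are isolated, so every three-entry sub-tuple meets an isolated vertex and we are done. Under case (ii) this covers every triple save the one consisting of the three non-isolated vertices; but these are precisely the even component $\Gamma^2(\bfa)$, whose entries have pairwise gcd $2$ and therefore triple gcd $2$ as well, so here $\chi^{S^1}(\Sigma(\bfb))=2+2+2-\tfrac{2\cdot2\cdot2}{2}=2$. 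In every case the value is positive. The hard part is thus the bookkeeping that confirms the two configurations---``$\bfb$ contains an isolated vertex'' and ``$\bfb=\Gamma^2(\bfa)$''---exhaust all three-entry sub-tuples: in case (i) this holds because at least two vertices are isolated, while in case (ii) the unique isolated vertex is avoided only by the triple of remaining vertices, which is exactly $\Gamma^2(\bfa)$.
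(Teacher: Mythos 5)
Your proof is correct and follows essentially the same route as the paper: both reduce to the three-entry sub-tuples via the parity observation of Remark \ref{rem:easypositive} and then verify $\kappa(\bfb)=0$ (equivalently $\chi^{S^1}(\Sigma(\bfb))=2$) by a case analysis driven by Brieskorn's criterion. The only cosmetic difference is that the paper normalises $\bfb$ to the form $(mp,mq,r)$ or $(2p,2q,2r)$ and substitutes, whereas you evaluate the same expression via the identity $b_0b_1b_2/\lcm(b_0,b_1,b_2)=g_{01}g_{02}g_{12}/\gcd(b_0,b_1,b_2)$; the two computations are equivalent.
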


\begin{proof}
By Remark \ref{rem:easypositive}
it is enough to show that all appearing
$3$-dimensional Brieskorn submanifolds
$\Sigma(\bfb)$, $\bfb\subset\bfa$,
are rational homology spheres.
We will show $\kappa(\bfb)=0$
for the rank of the first singular homology
\[
\kappa(\bfb)=
2-
\Big(\!\gcd(b_0,b_1)+\gcd(b_0,b_2)+\gcd(b_1,b_2)\Big)+
\frac{b_0b_1b_2}{\lcm(b_0,b_1,b_2)}
\]
of $\Sigma(\bfb)$.
Observe that $\bfb$ equals
\[
(mp,mq,r)
\quad\text{or}\quad
(2p,2q,2r)
\]
with natural numbers $p,q,r$ and $m$
such that $p,q,r$ and $m,r$, resp., are pairwise coprime.
Indeed,
deleting the isolated point required in (ii) of Brieskorn's criterion
(see Section \ref{subsec:brieskorncriterion})
yields $\bfb=(2p,2q,2r)$;
otherwise, we obtain $\bfb=(2p,2q,r)$.
Starting with $\bfa$ fulfilling (i) one gets $\bfb=(mp,mq,r)$
by deleting one of the isolated points.
Here,
$m$ is the greatest common divisor of the two remaining,
potentially related $a_j$'s.
Keeping two isolated points yields $\bfb=(p,q,r)$.
Hence,
\[
\kappa(\bfb)=
2-(m+2)+\frac{m^2pqr}{mpqr}=0
\quad\text{and}\quad
\kappa(\bfb)=
2-3\!\cdot\!2+\frac{2^3pqr}{2pqr}=0
\,,
\]
resp., as claimed.
\end{proof}

\begin{rem}
\label{rem:generalpositive}
Positivity of the equivariant Euler characteristic
$\chi^{S^1}\big(\Sigma(\bfb)\big)$
for all odd-dimensional invariant Brieskorn submanifolds
$\Sigma(\bfb)$, $\bfb\subset\bfa$,
of a general Brieskorn sphere $\Sigma(\bfa)$
can be ensured by requiring all $\Sigma(\bfb)$
to be rational homology spheres.
By Brieskorn's criterion this will be satisfied
provided that the $a_j$'s in $\bfa$ are pairwise coprime.
\end{rem}


\section{Contact homological invariant\label{sec:conthomoinv}}

The Brieskorn contact structure $\xi_{\bfa}$ on $\Sigma(\bfa)$
up to contact isotopy can be given by the kernel of the
contact form defined by
\[
\frac{\rmi}{4\pi}
\sum_{j=0}^n
a_j
\big(
	z_j\rmd\bar{z}_j-\bar{z}_j\rmd z_j
\big)\,
\Big|_{T\Sigma(\bfa)}
\,,
\]
see \cite[Section 7.1]{gei08}.
The corresponding Reeb vector field
\[
2\pi\rmi
\sum_{j=0}^n
\frac{1}{a_j}
\big(
	z_j\partial_{z_j}-\bar{z}_j\partial_{\bar{z}_j}
\big)\,
\Big|_{\Sigma(\bfa)}
\]
induces the flow
\[
\Phi_t(z_0,\ldots,z_n)=
\Big(\rme^{2\pi\rmi t/a_0}z_0,\ldots,\rme^{2\pi\rmi t/a_n}z_n\Big)
\]
on $\Sigma(\bfa)$.
Setting $\zeta=\rme^{2\pi\rmi t/d_{\bfa}}$
we obtain the circle action considered in Section \ref{subsec:posofequiveuchar},
so that the Reeb flow $\Phi_t$ is $d_{\bfa}$-periodic.


\subsection{Positivity of the mean Euler characteristic\label{subsec:posmeaneuchar}}

Denote by
\[
\Sigma_T:=\big\{\bfz\in\Sigma(\bfa)\:\big|\:\Phi_T(\bfz)=\bfz\big\}
\]
the submanifold
of $\Sigma(\bfa)$ that consists of all $T$-periodic Reeb orbits.
Observe, that the period $T$ necessarily will be an integer.
We assume $T$ to be positive.
Furthermore
the orbit space $\Sigma_T$ is equal to
the invariant Brieskorn submanifold
$\Sigma(\bfb)$, $\bfb\subset\bfa$,
obtained by setting $z_j=0$ for all $j\in\{0,\ldots,n\}$
for which $T/a_j$ is not an integer.
The dimension of $\Sigma_T$ equals
\[
\dim(\Sigma_T)=2m_T-3
\,,
\]
where $m_T$ is the number of $a_j$ that divide $T$.
By \cite[formula (4) on p.~250]{ueb16}
the Robbin--Salamon index of $\Sigma_T$ equals
\[
\mu_{\RS}(\Sigma_T)=\sum_{j = 0}^n
\left(
	\left\lfloor\frac{T}{a_j}\right\rfloor+
	\left\lceil\frac{T}{a_j}\right\rceil
\right)-
2T\,.
\]

\begin{lem}
\label{lem:determinthesign}
\[
\mu_{\RS}(\Sigma_T)\equiv n+1-m_T\:(\mathrm{mod}\:2)
\,.
\]
\end{lem}

\begin{proof}
If $T/a_j$ is an integer,
then
\[
\left\lfloor\frac{T}{a_j}\right\rfloor+
\left\lceil\frac{T}{a_j}\right\rceil=
2\cdot\frac{T}{a_j}
\,.
\]
Otherwise,
\[
\left\lfloor\frac{T}{a_j}\right\rfloor+
\left\lceil\frac{T}{a_j}\right\rceil
\equiv
1\:(\mathrm{mod}\:2)
\,.
\]
Therefore, each $a_j$ not dividing $T$,
contributes $1\:(\mathrm{mod}\:2)$ to $\mu_{\RS}(\Sigma_T)$.
\end{proof}

We denote by $\Sigma_T/S^1$
the space of $T$-periodic Reeb orbits
up to parametrisation,
i.e.\ the quotient orbifold by the natural action,
which has dimension $2m_T-4$.

\begin{cor}
\label{cor:determinthesign}
\[
\mu_{\RS}(\Sigma_T)-
\frac12\dim(\Sigma_T/S^1)
\equiv n+1\:(\mathrm{mod}\:2)
\,.
\]
\end{cor}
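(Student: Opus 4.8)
The goal is to compute
\[
\mu_{\RS}(\Sigma_T)-\tfrac12\dim(\Sigma_T/S^1)\pmod 2
\]
and show it equals $n+1$. The plan is to feed the two already-established ingredients into each other. Lemma \ref{lem:determinthesign} gives the first term modulo $2$ as $n+1-m_T$, while the definition of $\Sigma_T/S^1$ records that its dimension is $2m_T-4$. So the whole calculation reduces to substituting these two facts and simplifying; there is essentially no new geometry to supply, only bookkeeping of parities.

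First I would write $\tfrac12\dim(\Sigma_T/S^1)=\tfrac12(2m_T-4)=m_T-2$, which is the one place to be slightly careful: the quantity $\dim(\Sigma_T/S^1)$ is even (it is $2m_T-4$), so dividing by $2$ is legitimate and produces the honest integer $m_T-2$. Then I would combine:
\[
\mu_{\RS}(\Sigma_T)-\tfrac12\dim(\Sigma_T/S^1)
\equiv (n+1-m_T)-(m_T-2)\pmod 2
\,.
\]

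Finally I would simplify the right-hand side. We have $(n+1-m_T)-(m_T-2)=n+3-2m_T$, and since $2m_T\equiv 0\pmod 2$ and $n+3\equiv n+1\pmod 2$, this is congruent to $n+1\pmod 2$, as claimed. Concretely, the $-2m_T$ term vanishes modulo $2$ and the additive constant shifts $n+1$ by $2$, which is invisible modulo $2$.

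Honestly there is no real obstacle here: the corollary is a one-line parity computation, and the only thing worth flagging is that the subtraction of the two $m_T$ contributions is exactly what makes the answer independent of $m_T$ (and hence of $T$). The mild risk is an off-by-one or sign slip in tracking the constants $+1$, $-2$, and $+3$, so the proof should display the intermediate expression $n+3-2m_T$ explicitly rather than collapse everything in one step.
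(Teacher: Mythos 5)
Your proof is correct and is exactly the argument the paper intends (the corollary is stated without proof, as an immediate consequence of Lemma \ref{lem:determinthesign} combined with $\dim(\Sigma_T/S^1)=2m_T-4$). The parity bookkeeping $(n+1-m_T)-(m_T-2)=n+3-2m_T\equiv n+1\pmod 2$ is the whole content, and you have it right.
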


The minimal periods $T_1<\ldots<T_k=d_{\bfa}$
of the Reeb flow $\Phi_t$ are natural numbers $T_j$
that divide $d_{\bfa}$.
Following \cite{fschvk12} and \cite[Section 5.8]{kvk16}
define the frequencies
$\phi_{T_i}(T_{i+1},\ldots,T_k)$
to be the number of natural numbers $a$
such that $aT_i <T_k$ and $aT_i\notin T_j\N$
for all $j\in\{i+1,\ldots,k\}$.
By convention we have $\phi_{d_{\bfa}}(\emptyset)=1$.

The mean Euler characteristic $\chim(\xi_{\bfa})$
induced by the $S^1$-equivariant symplectic homology
defined in \cite{fschvk12} was computed
in \cite[Proposition 5.20]{kvk16} for the present context:
Whenever $\mu_{\RS}\big(\Sigma(\bfa)\big)\neq0$
the mean Euler characteristic $\chim(\xi_{\bfa})$
is defined, an invariant of the isomorphy class
of the contact structure $\xi_{\bfa}$,
and equals
\[
\chim(\xi_{\bfa})=
 \frac{
 	\sum_{i = 1}^k
	(-1)^{
		\mu_{\RS}(\Sigma_{T_i})-
		\frac12\dim(\Sigma_{T_i}/S^1)}
	\phi_{T_i}(T_{i+1},\ldots,T_k)\cdot
	\chi^{S^1}(\Sigma_{T_i})
	}
 	{
		 |\mu_{\RS}\big(\Sigma(\bfa)\big)|
	}
\,.
\]
Corollary \ref{cor:determinthesign} implies:
\[
\chim(\xi_{\bfa})=
(-1)^{n+1}
 \frac{
 	\sum_{i = 1}^k
	\phi_{T_i}(T_{i+1},\ldots,T_k)\cdot
	\chi^{S^1}(\Sigma_{T_i})
	}
 	{
		 |\mu_{\RS}\big(\Sigma(\bfa)\big)|
	}
\,.
\]
Notice, that \cite[formula (31) on p.~265]{ueb16} gives
\[
\mu_{\RS}\big(\Sigma(\bfa)\big)
=2d_{\bfa}\cdot
\left(
	\sum_{j = 0}^n\frac{1}{a_j}-1
\right)
\,.
\]

\begin{lem}
\label{lem:meaneuchrwelldef}
The mean Euler characteristic $\chim(\xi_{\bfa})$
is defined for all Brieskorn manifolds $\Sigma(\bfa)$,
for which $\bfa$ admits an isolated exponent.
\end{lem}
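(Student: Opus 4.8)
The plan is to reduce the claim to the single numerical condition governing definedness. As recalled just before the statement, $\chim(\xi_{\bfa})$ is defined precisely when $\mu_{\RS}\big(\Sigma(\bfa)\big)\neq0$, and the displayed formula from \cite{ueb16} gives $\mu_{\RS}\big(\Sigma(\bfa)\big)=2d_{\bfa}\big(\sum_{j=0}^n\frac1{a_j}-1\big)$. Since $d_{\bfa}>0$, it therefore suffices to prove that the presence of an isolated exponent forces $\sum_{j=0}^n\frac1{a_j}\neq1$.

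First I would argue by contradiction, assuming $\sum_{j=0}^n\frac1{a_j}=1$, and relabel so that $a_0$ is the isolated exponent, i.e.\ $\gcd(a_0,a_j)=1$ for all $j\geq1$. Setting $D:=\lcm(a_1,\ldots,a_n)$, coprimality of $a_0$ with each $a_j$ for $j\geq1$ gives $\gcd(a_0,D)=1$, whence $d_{\bfa}=\lcm(a_0,D)=a_0D$.

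The key step is then to clear denominators. Multiplying the assumed identity by $d_{\bfa}=a_0D$ yields $D+a_0\sum_{j=1}^n\frac{D}{a_j}=a_0D$, where each $D/a_j$ is an integer because $a_j\mid D$ for $j\geq1$. Rearranging gives $D=a_0\big(D-\sum_{j=1}^n\frac{D}{a_j}\big)$, so that $a_0\mid D$. Together with $\gcd(a_0,D)=1$ this forces $a_0=1$, contradicting $a_0\geq2$ and completing the argument.

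I expect the only subtlety --- rather than a genuine obstacle --- to lie in justifying the factorisation $d_{\bfa}=a_0D$ and the integrality of the terms $D/a_j$, both of which hinge on the isolatedness hypothesis; once these are in place the divisibility contradiction is immediate.
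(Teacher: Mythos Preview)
Your proof is correct and follows essentially the same route as the paper's: argue by contradiction, clear denominators using the least common multiple, and extract a divisibility statement that clashes with the coprimality of the isolated exponent. The only cosmetic difference is that you introduce $D=\lcm(a_1,\ldots,a_n)$ and conclude $a_0\mid D$ against $\gcd(a_0,D)=1$, whereas the paper multiplies by $d_{\bfa}$ itself and phrases the contradiction as $a_0^2\mid d_{\bfa}$; these are the same observation in different packaging.
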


\begin{proof}
Arguing by contradiction we assume that 
$\mu_{\RS}\big(\Sigma(\bfa)\big)=0$,
which is equivalent to
$\frac{1}{a_0}+\ldots+\frac{1}{a_n}=1$.
This implies $a_j\geq2$ for all $j=0,\ldots,n$.
By assumption,
one of the exponents, say $a_0$,
satisfies $\gcd(a_0,a_j)=1$ for all $j=1,\ldots,n$.
Multiplication with $d_{\bfa}$ yields:
$d_{\bfa}/a_0=d_{\bfa}-d_{\bfa}/a_1-\ldots-d_{\bfa}/a_n$.
Therefore, $a_0^2$ divides $d_{\bfa}$,
which contradicts $d_{\bfa}$ being
the least common multiple of the $a_j$
for all $j=0,\ldots,n$.
\end{proof}

\begin{prop}
\label{prop:contactpositive}
Let $\Sigma(\bfa)$ be a $5$-dimensional Brieskorn sphere.
Then
\[
\chim(\xi_{\bfa})> 0
\,.
\]
\end{prop}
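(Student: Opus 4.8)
The plan is to read off positivity directly from the explicit formula for $\chim(\xi_{\bfa})$ derived immediately above, specialised to the case at hand. Since $\Sigma(\bfa)$ is $5$-dimensional we have $n=3$, so the sign prefactor is $(-1)^{n+1}=(-1)^4=1$ and the formula collapses to
\[
\chim(\xi_{\bfa})=
 \frac{
 	\sum_{i = 1}^k
	\phi_{T_i}(T_{i+1},\ldots,T_k)\cdot
	\chi^{S^1}(\Sigma_{T_i})
	}
 	{
		 |\mu_{\RS}\big(\Sigma(\bfa)\big)|
	}
\,.
\]
It thus suffices to exhibit a strictly positive numerator over a positive denominator. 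First I would dispose of the denominator: as $\Sigma(\bfa)$ is a $5$-sphere it satisfies Brieskorn's criterion, and both alternatives (i) and (ii) of the criterion guarantee at least one isolated point, i.e.\ an isolated exponent. Lemma \ref{lem:meaneuchrwelldef} then gives $\mu_{\RS}\big(\Sigma(\bfa)\big)\neq0$, so $|\mu_{\RS}\big(\Sigma(\bfa)\big)|>0$.

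For the numerator I would use two structural facts. Each frequency $\phi_{T_i}(T_{i+1},\ldots,T_k)$ is by definition a count of natural numbers, hence non-negative; moreover the top term $i=k$ carries $\phi_{d_{\bfa}}(\emptyset)=1>0$ by convention. The essential ingredient is the positivity of every $\chi^{S^1}(\Sigma_{T_i})$. As recorded above, each orbit space $\Sigma_{T_i}$ coincides with an invariant Brieskorn submanifold $\Sigma(\bfb)$, $\bfb\subset\bfa$, obtained by freezing the coordinates $z_j$ with $a_j\nmid T_i$. Proposition \ref{prop:fullypositive} applies verbatim to such $\Sigma(\bfb)$ and yields $\chi^{S^1}(\Sigma_{T_i})>0$ for all $i$. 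The numerator is therefore a sum of non-negative terms with at least one strictly positive summand, hence positive, and dividing by the positive denominator gives $\chim(\xi_{\bfa})>0$.

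The step I expect to carry the genuine content—as opposed to bookkeeping—is the appeal to Proposition \ref{prop:fullypositive}, since the vanishing of the sign and the non-negativity of the frequencies are automatic: the whole statement ultimately rests on the fact that in dimension $5$ every invariant Brieskorn submanifold has positive equivariant Euler characteristic. The one subtlety I would check carefully is that the $\Sigma_{T_i}$ really lie among the submanifolds covered by that proposition, namely that they are nonempty of dimension $2m_{T_i}-3$ with $m_{T_i}\in\{2,3,4\}$; the borderline value $m_{T_i}=1$ cannot occur, because a single free coordinate $z_j$ would force $z_j^{a_j}=0$ and hence an empty orbit space, so no offending one-entry tuple $\bfb$ ever appears in the sum.
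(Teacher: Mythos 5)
Your proof is correct and takes essentially the same route as the paper, whose entire argument is the one-line observation that positivity ``follows with the positivity statement in Proposition \ref{prop:fullypositive}''. You have merely spelled out the bookkeeping (sign $(-1)^{n+1}=1$ for $n=3$, non-vanishing of $\mu_{\RS}(\Sigma(\bfa))$ via the isolated exponent guaranteed by Brieskorn's criterion and Lemma \ref{lem:meaneuchrwelldef}, non-negativity of the frequencies) that the paper leaves implicit.
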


\begin{proof}
Follows with the positivity statement in Proposition \ref{prop:fullypositive}.
\end{proof}

\begin{rem}
\label{rem:generalcontactpositive}
In view of Remark \ref{rem:generalpositive}
or the formula at the beginning of Section \ref{subsec:pairwisecoprim},
we obtain positivity of
$(-1)^{n+1}\chim(\xi_{\bfa})$ in all dimensions,
whenever the exponents of $\Sigma(\bfa)$ are pairwise coprime.
\end{rem}


\subsection{Pairwise coprime\label{subsec:pairwisecoprim}}

By \cite[Proposition 4.6]{fschvk12} or \cite[Proposition 5.21]{kvk16}
the mean Euler characteristic $\chim(\xi_{\bfa})$ of Brieskorn manifolds
$\Sigma(\bfa)$ with pairwise coprime exponents $a_0,\ldots,a_n$
equals:
\[
(-1)^{n+1}
\frac{
	n+
	(n-1)\sum_{j_0}(a_{j_0}-1)+
	\cdots+
	\sum_{j_0<\cdots<j_{n-2}}(a_{j_0}-1)\cdots(a_{j_{n-2}}-1)
	}
	{
	2|(\sum_ja_0\cdots\hat{a}_j\cdots a_n) - a_0\cdots a_n|
	}
\,.
\]

\begin{exwith}
\label{ex:below14}
Let $\bfa=\big(m,m+1,2m+1,4m+3\big)$
for a natural number  $m$ not divisible by $3$.
Abbreviate the contact structure $\xi_{\bfa}$ on $\Sigma_m$
by $\xi_m$, see Example \ref{ex:sequencewithm}.
We obtain
\[
\chim(\xi_m)=
\frac{21m^2+17m+3}{16m^4-8m^3-50m^2-34m-6}
\,.
\]
Observe that 
\[
\chim(\xi_4)=\frac{407}{2642}<\frac14
\]
and that the function $m\mapsto\chim(\xi_m)$
decreases strictly for $m\geq4$.
Indeed,
$\chim(\xi_m)$,
viewed as a rational function over the complex numbers,
has all its poles on the open disc of radius $3$.
This follows with Rouch\'e's theorem as $16\cdot3^4=1296$
dominates the absolut value of the lower order terms of the denominator
$h(m)$ evaluated along $|m|=3$,
which is bounded by $216+450+102+6=774$.
Furthermore,
denoting the nominator by $g(m)$,
the polynomial $(g'h-h'g)(m)$ taken over the reals
equals $-672m^5-648m^4+80m^3+208m^2+48m$
and is, therefore, negative for $m\geq1$.
\end{exwith}

\begin{rem}
\label{rem:generalpmdecreasing}
Assigning to each natural number $\ell$
the $(n+1)$-tuple $\bfa_{\ell}$
of consecutive Fermat numbers
\[
a_j=F_{\ell+j}=x^{2^j}+1
\,,\quad\text{with}\,\,
x=2^{2^{\ell}}
\,,
\]
as in Example \ref{ex:fermatm},
we see that $\chim(\xi_{\bfa_{\ell}})$ is a rational function
in $x$,
whose asymptotic behaviour is given by
\[
(-1)^{n+1}\frac{1}{2(a_0-1)(a_1-1)}=
(-1)^{n+1}\frac{1}{2x^3}
\,.
\]
Hence,
for sufficiently large $\ell$ we find that
$\ell\mapsto(-1)^{n+1}\chim(\xi_{\bfa_{\ell}})$
is strictly decreasing taking values in $(0,1/4)$.
\end{rem}


\subsection{Monoidomorphic contact connected sum\label{subsec:monoidomorphic}}

In \cite[Section 5.9]{kvk16} and \cite[Proposition 6]{ueb16}
additivity properties of $\chim+(-1)^n\frac12$
under contact connected sum were discussed:
Let $\Sigma({\bfa})$ and $\Sigma({\bfb})$ be Brieskorn manifolds
of dimension $2n-1$,
for which the mean Euler characteristic is defined.
Then the mean Euler characteristic
for the contact connected sum of
$\big(\Sigma(\bfa),\xi_{\bfa}\big)$ and
$\big(\Sigma(\bfb),\xi_{\bfb}\big)$
is defined and equals
\[
\chim(\xi_{\bfa}\#\xi_{\bfb})=
\chim(\xi_{\bfa})+
\chim(\xi_{\bfb})+
(-1)^n\frac12
\,.
\]

\begin{proof}[Proof of Theorem \ref{thmintr:monodromy}]
For $m\geq4$ the contact invariant $\chim(\xi_m)>0$
is strictly decreasing in $m$,
see Example \ref{ex:below14}.
Hence,
the Brieskorn contact structures $\xi_m$
are pairwise non-contactomorphic.
Moreover,
$\chim(\xi_4)<\frac14$ implies
$\chim(\xi_m\#\xi_m)<0$
for all $m\geq4$ by monoidomorphicity.
Therefore, with Proposition \ref{prop:contactpositive},
the contact structures $\xi_m\#\xi_m$ on $S^5$
are not contactomorphic to a Brieskorn contact structure $\xi_{\bfa}$
for all $m\geq4$.
Because of
$\chim(\xi_m\#\xi_m)=2\chim(\xi_m)-1/2$
the non-Brieskorn contact structures $\xi_m\#\xi_m$ themselves
are pairwise non-contactomorphic for different integers $m\geq4$.
\end{proof}

\begin{rem}
\label{rem:general}
In view of Remark \ref{rem:generalpmdecreasing}
we obtain for sufficiently large natural numbers $\ell$
that $(-1)^{n+1}\chim(\xi_{\bfa_{\ell}})<1/4$.
With monoidomorphicity
\[
\chim(\xi_{\bfa_{\ell}}\#\xi_{\bfa_{\ell}})=
2\chim(\xi_{\bfa_{\ell}})+
(-1)^n\frac12
\]
this implies
\[
(-1)^{n+1}\chim(\xi_{\bfa_{\ell}}\#\xi_{\bfa_{\ell}})<0
\,.
\]
This contrasts the fact that
$(-1)^{n+1}\chim(\xi_{\bfa_{\ell}})>0$
by Remark \ref{rem:generalcontactpositive}.
\end{rem}


\section{Inverse elements\label{sec:invelem}}

Contact structures on $S^{2n-1}$
form an abelian monoid under contact connected sum.
The standard contact structure
$TS^{2n-1}\cap\rmi TS^{2n-1}$
appears as the neutral element.
A natural question is about invertibility
of contact structures within the monoidal structure.
For $n=2$ no non--trivial element
will be invertible by overtwistedness.
For $n\geq3$,
the Eliashberg--Floer--McDuff theorem implies
that no potential inverse of a Brieskorn contact structure
is a Brieskorn contact structure itself.
Indeed,
boundary connected sum with the corresponding
natural critical Stein fillings would yield
a symplectically aspherical filling
of $TS^{2n-1}\cap\rmi TS^{2n-1}$,
which in turn will be a smooth ball,
see \cite{mcd91,bgz19,zhou23}.
But this would contradict criticality
of the boundary connected sum of 
the natural critical Stein fillings
in view of the Mayer--Vietoris sequence.


\begin{ack}
We thank Peter Albers, Kai Cieliebak, Urs Frauenfelder,
Hansj\"org Geiges, Myeonggi Kwon, Agustin Moreno
and the participants of the A5/C5-Seminar
Jan Eyll and Lars Kelling. 
\end{ack}


\end{document}